\theoremstyle{plain}
\newtheorem{theorem}{Theorem}[section]
\newtheorem{lemma}[theorem]{Lemma}
\newtheorem{proposition}[theorem]{Proposition}
\theoremstyle{definition}
\newtheorem{definition}[theorem]{Definition}
\newtheorem{example}[theorem]{Example}
\theoremstyle{remark}
\newcommand{\R}{\mathbb{R}}
\newcommand{\N}{\mathbb{N}}
\begin{document}


\title{A parallel subgradient projection  algorithm\\ for quasiconvex equilibrium problems under the  intersection  of convex sets\thanks{This work is supported by the NAFOSTED, Grant 101.01-2020.06.}}

\author{Le Hai Yen and Le Dung Muu
\\ Institute of Mathematics and TIMAS, Thang Long University}

\date{Email. lhyen@math.ac.vn; ldmuu@math.ac.vn}
\maketitle

\begin{abstract}
In this paper, we studied the equilibrium problem where the bi-function may be quasiconvex with respect to the second variable and the feasible set is the intersection of a finite number of convex sets. We propose a projection-algorithm, where the projection can be computed independently onto each component set. The convergence of the algorithm is investigated and numerical examples for a variational inequality problem involving affine fractional operator are provided to demonstrate the behavior of the algorithm.
\end{abstract}

\textbf{Keywords:} Equilibria; Quasiconvexity; Intersection; Subgradient method; Projection method.

\font\abst=cmr12

\section{Introduction}Let $C$ be a nonempty closed convex set in $\R^n$ and $f:  \R^n \to \R \cup \{+\infty\}$ be a given bifunction such that $f(x,y) < + \infty$ for every $x, y \in C$. We consider the problem
$$ \textnormal{Find } \ x^*\in C:\  f(x^*,y)  \geq 0 \quad \forall y\in C. \eqno (EP)$$
This inequality   is often called equilibrium problem.
 The interest of this
problem is that it unifies many important problems such as the Kakutani fixed
point, variational inequality, optimization and the Nash equilibrium problems \cite{Bi2013,Bi2018,Bl1994,Mu1992} in a
convenient way. The inequality in (EP) first was used in \cite{Ni1955} by Nikaido and Isoda for a convex game
model. The first result for solution existence of (EP) has been obtained by
Ky Fan in \cite{Fa1972}, where the bifunction $f$ can be quasiconvex with respect to the
second argument.
 Suppose, as usual, that $f(x,x) = 0$ for all $x\in C$, then it is easy to see that (EP) is equivalent to the fixed point problem
$$ \textnormal{Find } x^* \in C: x^* \in S(x^*), \eqno(FP)$$
where the fixed point mapping $S$ is defined by taking
$$S(x) := \textnormal{argmin} \{ f(x,y): y\in C\}. \eqno P(x)$$
 This fact suggests the use of the iterative scheme $x^{k+1} \in S(x^k)$ for the fixed point problem to solve inequality $(EP)$. The first difficulty that we have to face with here is that the mapping $S$ may  not be singleton, even it is  not defined at every point of $C$, i.e., Problem $P(x)$ is not solvable. To overcome this difficulty   one can use the auxiliary problem principle that states that if $f(x,.)$ is convex, subdifferentiable on $C$, then for any $r>0$, Problem $(EP)$ is equivalent to the following one
$$ \textnormal{Find } \ x^*\in C:\  f(x^*,y) + r\|y-x^*\|^2 \geq 0 \quad \forall y\in C, \eqno (EP) $$
in the sense that their solution-sets coincide.
Then the corresponding  fixed point mapping  takes the form
\begin{equation}\label{1.1}
s(x) := \textnormal{argmin} \{ f(x,y) + r\|y-x\|^2: y\in C\ \}.
\end{equation}
Since $f(x,.)$ is convex and $\|.-x\|^2$ is strongly convex, the latter mathematical program is always uniquely solvable.
However  in the case $f(x,.)$ is quasiconvex rather than convex, the auxiliary problem principle cannot be applied because of the fact that the mathematical programming problem defining the mapping $s(.)$ is nonlonger convex, even not quasiconvex, and therefore solving it is an extremely difficult task. Based upon the auxiliary problem principle, a lot numbers of algorithms    using techniques of mathematical programming methods have been developed for solving problem (EP), e.g. \cite{Bi2018,Hi2019,MQ2009,QMH2008,Sa2011,San2016,Th2017,Yen2019} and the references therein, however to our best knowledges, all of them require that the bifunction $f$ is convex with respect to its second variable. In our recent preprint \cite{Ye2020}  we developed  an  algorithm for problem (EP), where the bifunction $f$ may be quasiconvex in its second variable. In order to handle the quasiconvexity, we used the metric projection onto $C$ along the direction defined by a star-subgradient of the quasiconvex function $f(x,.)$, rather than solving the mathematical programming problem (\ref{1.1}). However in  general, the projection onto $C$ is not easy to compute. In this paper we continue our work by considering problem (EP) where the feasible set $C$ is the intersection of a finite number of convex sets (often in practice), and we propose a projection-algorithm, where the projection can be computed independently onto each component set.

The organization of this paper as the following. The next section are preliminaries on the quasiconvex function on $\R^n$ and its star-subdifferential. A parallel algorithm  for solving (EP) when $C$ is the intersection of a finite number of convex sets is proposed,  and its convergence analysis is studied in Section 3. In the last section, numerical experiences are provided to prove the efficiency of the algorithm for a class of equilibrium problems involving quasiconvex bifunctions.

             \section{ Preliminaries on Quasiconvex Function and Its Subdifferentials}
First of all, let us recall the well known definitions on the quasiconvex function and its star- subdifferential that will be used to the algorithm.

\begin{definition}\cite{Ma1969}
A function $\varphi:\R^n \rightarrow \R \cup \{+\infty\}$ is called quasiconvex on a convex subset $Y$ of $ \R^n$ if and only if for every $x,y \in Y$ and $\lambda \in [0,1]$, one has
\begin{equation}
\varphi[(1-\lambda)x +\lambda y] \leq \max[\varphi(x), \varphi(y)].\label{eq1}
\end{equation}
\end{definition}
It is easy to see that $\varphi$ is quasiconvex on a convex set  $Y$   if and only if the level set $ \{ x\in Y: \quad \varphi(x)<\alpha\}$  on $Y$ of $\varphi$ at $x$ is convex for every $\alpha \in \R$.

We recall  that a function $\varphi: \R^n \to \R$ is said to be  Lipschitz on $Y$ at a point $y \in Y$, if there exist a finite number $ L > 0$  such that
$$ |\varphi (x) - \varphi (y)| \leq L \|x-y\| \  \forall x\in Y.$$

The star-sudifferential of $\varphi$, see e,g. \cite{Pe1998}  is defined as
  $$ \partial^*\varphi(x) :=\{g\in \R^n:\  \langle g, y-x\rangle < 0 \ \forall y\in L_\varphi(x)\},$$
where $L_\varphi(x) := \{y\in R^n: \ \varphi(y) < \varphi (x)\}$ is the level set of $\varphi$ at the  level $\varphi(x)$. Clearly, if $\bar{L}_\varphi(x) $ is the closure of $L_\varphi(x)$, then
$$ \partial^*\varphi(x) :=\{g\in \R^n;\  \langle g, y-x\rangle \leq 0 \ \forall y\in \bar{L}_\varphi(x)\}.$$
Hence $\partial^* \varphi (x) \equiv \R^n$ if $x$ is a minimizer of $\varphi$ over $\R^n$, and if $\varphi$ is continuous on $\R^n$ then
$\partial^*\varphi(x)$ is the normal cone of $\bar{L}_\varphi(x)$, that is
$$\partial^* \varphi(x) =  N (\bar{L}_\varphi(x),x):= \{g\in \R^n: \ \langle g, y-x\rangle \leq 0\  \forall y \in \bar{L}_\varphi(x)\}.$$
Furthermore $\partial^*\varphi(x)$ contains nonzero vector \cite{GP1973}. This subdifferential thus is also called normal-subdifferential.


\begin{lemma}(\cite{Ki2001}, \cite{Pe2000})
\label{lem1} Assume that $\varphi:\R^n \rightarrow \R $ is continuous and quasiconvex. Then
\begin{equation}
\partial^* \varphi(x) \not= \emptyset \quad \forall x\in \R^n,
\end{equation}

 \begin{equation}
0\in \partial^* \varphi(x)  \Leftrightarrow x \in \textnormal{argmin} \{ \varphi(y): \ y\in \R^n \}.
\end{equation}
\end{lemma}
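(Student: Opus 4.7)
The plan is to dispose of both assertions via the single dichotomy of whether $x$ minimizes $\varphi$ on $\R^n$. If $x$ is a minimizer, then $L_\varphi(x)=\{y:\varphi(y)<\varphi(x)\}$ is empty, the defining condition on a star-subgradient becomes vacuous, and $\partial^*\varphi(x)=\R^n$; this yields both nonemptiness at such $x$ and the ``$\Leftarrow$'' half of the equivalence. Conversely, if $0\in\partial^*\varphi(x)$, then $0=\langle 0,y-x\rangle<0$ would have to hold for every $y\in L_\varphi(x)$, which is possible only when $L_\varphi(x)=\emptyset$, i.e.\ $x$ is a minimizer; this takes care of the ``$\Rightarrow$'' direction.

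The only remaining task is nonemptiness of $\partial^*\varphi(x)$ when $x$ is not a minimizer. Continuity of $\varphi$ then makes $L_\varphi(x)$ nonempty and open, and quasiconvexity makes it convex. My plan is to prove the geometric fact
$$x\notin\mathrm{int}(\bar L_\varphi(x)),$$
and then apply the supporting hyperplane theorem to the nonempty closed convex set $\bar L_\varphi(x)$ at $x$: this yields a nonzero $g$ with $\langle g,y-x\rangle\leq 0$ for all $y\in\bar L_\varphi(x)$. To upgrade to the strict inequality in the original definition of $\partial^*\varphi(x)$, I would use that $L_\varphi(x)$ is open: if equality held at some $y_0\in L_\varphi(x)$, a ball $B(y_0,\delta)\subset L_\varphi(x)$ would force $\langle g,v\rangle\leq 0$ for all $\|v\|<\delta$, hence $g=0$, a contradiction.

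The main obstacle is the displayed geometric statement, and I would reduce it to the standard identity $\mathrm{int}(\bar A)=A$ for any nonempty open convex $A\subset\R^n$, specialised to $A=L_\varphi(x)$ together with the trivial observation $x\notin L_\varphi(x)$ (since $\varphi(x)\not<\varphi(x)$). The identity itself follows from the line segment principle for convex sets: given $a\in A$ and $z\in\mathrm{int}(\bar A)$, pick $t>0$ small enough that $z':=z+t(z-a)\in\bar A$; then $z$ lies on the open segment joining $a\in\mathrm{int}(A)$ to $z'\in\bar A$, which the principle places inside $\mathrm{int}(A)=A$.
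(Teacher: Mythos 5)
Your argument is correct. Note, however, that the paper does not prove this lemma at all: it is quoted from Kiwiel and Penot--Zalinescu, and the surrounding text merely records the ingredients you establish (that $\partial^*\varphi(x)\equiv\R^n$ at a global minimizer, that for continuous $\varphi$ the star-subdifferential is the normal cone $N(\bar L_\varphi(x),x)$, and that it contains a nonzero vector, the latter credited to Greenberg--Pierskalla). So there is no in-paper proof to compare against; what you have written is a self-contained replacement for the citation. Each step checks out: the dichotomy on whether $x$ minimizes $\varphi$ cleanly handles the equivalence and the trivial half of nonemptiness; in the non-minimizer case $L_\varphi(x)$ is nonempty (by non-minimality --- continuity is only needed for openness, a slight misattribution in your phrasing but not an error), open and convex, the identity $\mathrm{int}(\bar A)=A$ for nonempty open convex $A$ gives $x\notin\mathrm{int}(\bar L_\varphi(x))$, separation produces a nonzero $g$ with $\langle g,y-x\rangle\le 0$ on $\bar L_\varphi(x)$, and your openness argument correctly upgrades this to the strict inequality required by the definition of $\partial^*\varphi(x)$ (note this last step is genuinely needed, since the paper's primary definition uses $<$ on $L_\varphi(x)$ rather than $\le$ on the closure). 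The one point worth making explicit if you write this up is the version of the separation theorem you invoke: since $L_\varphi(x)$ is open and nonempty, $\bar L_\varphi(x)$ has nonempty interior, so the standard supporting-hyperplane statement for a point outside the interior of a full-dimensional convex set applies without any relative-interior caveats.
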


 For simplicity of notation, let $f_k(x) := f(x^k,x)$. For the star-subdifferential we have the following results will be used in the sequel.

\begin{lemma}{\cite{Ki2001}}
\label{lem3.1a}
 If $B(\overline{x}, \epsilon) \subset L_{f_k}(x^k)$ for some $\overline{x}\in \R^n$ and $\epsilon \geq 0$, then
\begin{equation}
\langle g^k, x^k-\overline{x}\rangle >\epsilon.\nonumber
\end{equation}
\end{lemma}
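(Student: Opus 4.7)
The plan is to read off the inequality directly from the definition of the star-subdifferential by substituting a carefully chosen point of the ball. Implicit in the statement (as is standard in this context) is that $g^k$ is a star-subgradient of $f_k$ at $x^k$, i.e.\ $g^k \in \partial^* f_k(x^k)$, and one can assume $\|g^k\|=1$ according to the normalization convention used throughout the paper; otherwise the conclusion should be read as $\langle g^k, x^k-\overline{x}\rangle > \epsilon\|g^k\|$.

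First I would unpack what the hypothesis $B(\overline{x},\epsilon)\subset L_{f_k}(x^k)$ gives us: every point $y = \overline{x}+v$ with $\|v\|<\epsilon$ satisfies $f_k(y)<f_k(x^k)$, so by the very definition of $\partial^*f_k(x^k)$ recalled in the preliminaries,
\begin{equation*}
\langle g^k,\; \overline{x}+v-x^k\rangle < 0 \qquad \forall v\in\R^n,\ \|v\|<\epsilon.
\end{equation*}
The second step is to choose $v$ so as to make the term $\langle g^k,v\rangle$ as large as possible. Taking $v = t\, g^k/\|g^k\|$ for $t\in[0,\epsilon)$, which lies in the open ball, yields $\langle g^k,v\rangle = t\|g^k\|$, whence
\begin{equation*}
\langle g^k, x^k-\overline{x}\rangle > t\|g^k\|.
\end{equation*}
Letting $t\uparrow\epsilon$ gives $\langle g^k, x^k-\overline{x}\rangle \geq \epsilon\|g^k\|$, and under the normalization $\|g^k\|=1$ the stated strict inequality $\langle g^k, x^k-\overline{x}\rangle > \epsilon$ follows (the strictness is preserved because for $t$ strictly less than $\epsilon$ the corresponding $y$ lies in the \emph{open} level set $L_{f_k}(x^k)$, on which the star-subgradient inequality is strict).

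The only subtlety worth highlighting is passing from the open-ball strict inequalities to a bound involving the radius $\epsilon$ itself; this is handled by the supremum over $t<\epsilon$ and is clean because the map $t\mapsto t\|g^k\|$ is continuous. If $\epsilon=0$, the hypothesis forces $\overline{x}\in L_{f_k}(x^k)$ (or its closure, in the degenerate reading), and the claim reduces to the defining inequality of the star-subdifferential. No quasiconvexity or continuity of $f_k$ beyond what is already assumed is needed; the proof uses nothing more than the definition of $\partial^*$ and elementary linearity of the inner product.
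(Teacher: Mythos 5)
Your overall strategy is the same as the paper's: substitute a point of the ball displaced from $\overline{x}$ in the direction $g^k$ into the defining inequality of $\partial^* f_k(x^k)$. The paper does this in one line by taking $y=\overline{x}+\epsilon g^k$ itself, i.e.\ it reads $B(\overline{x},\epsilon)$ as the \emph{closed} ball (consistent with allowing $\epsilon=0$, for which an open ball would be empty and the hypothesis vacuous), and the strict inequality $\langle g^k, x^k-\overline{x}\rangle>\epsilon$ then drops out directly from the strict inequality in the definition of the star-subdifferential.

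The gap in your version is the final strictness claim. From $\langle g^k, x^k-\overline{x}\rangle > t\|g^k\|$ for every $t<\epsilon$, taking the supremum over $t$ yields only $\langle g^k, x^k-\overline{x}\rangle \geq \epsilon\|g^k\|$; a supremum of strict inequalities is not strict, and your parenthetical ("the strictness is preserved because\ldots") does not repair this. Indeed, under the open-ball reading the strict conclusion is genuinely false: take $f_k(y)=\langle g^k, y-x^k\rangle$ with $\|g^k\|=1$, so that $L_{f_k}(x^k)$ is the open half-space $\{y:\langle g^k,y-x^k\rangle<0\}$ and $g^k\in\partial^* f_k(x^k)$, and set $\overline{x}=x^k-\epsilon g^k$; then the open ball $B(\overline{x},\epsilon)$ is contained in $L_{f_k}(x^k)$ while $\langle g^k,x^k-\overline{x}\rangle=\epsilon$ exactly. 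So the lemma as stated forces the closed-ball interpretation, and once you adopt it your substitution with $t=\epsilon$ is legitimate and gives the strict inequality immediately, with no limiting argument needed. (For the place the lemma is actually used in the convergence proof, your weaker conclusion $\geq\epsilon$ would in fact suffice, but as a proof of the stated lemma the argument is incomplete.)
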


\begin{lemma}\label{Ko1}
Let $h: \mathbb{R}^n \longrightarrow \mathbb{R}$ be a quasiconvex $L$-Lipschitz continuous function on $\mathbb{R}^n$. Then for every $y\in \overline{L}_{h}(z)$, $g \in N(\bar{L}_h(z), z)$ such that $\|g\|=1$, it holds
\begin{equation}
L \langle g, y-z \rangle \leq h(y) -h(z) \leq 0,\label{eqlem}
\end{equation}


\end{lemma}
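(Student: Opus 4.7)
The statement has two inequalities; the right one $h(y)-h(z)\le 0$ is essentially free, and the left one $L\langle g,y-z\rangle\le h(y)-h(z)$ is the real content. My plan is as follows.

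First I would dispose of the right inequality. Since $h$ is continuous, the closure satisfies $\bar L_h(z)\subset\{w:h(w)\le h(z)\}$, so $y\in\bar L_h(z)$ immediately yields $h(y)\le h(z)$.

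For the left inequality, write $\alpha:=-\langle g,y-z\rangle$. Because $g\in N(\bar L_h(z),z)$ and $y\in\bar L_h(z)$, we have $\alpha\ge 0$. The key geometric construction is to lift $y$ along the outward normal direction: set
\begin{equation*}
p := y + \alpha g.
\end{equation*}
Since $\|g\|=1$, we have $\|p-y\|=\alpha$, and a direct computation gives $\langle g,p-z\rangle = \langle g,y-z\rangle + \alpha = 0$, so $p$ lies on the supporting hyperplane of $\bar L_h(z)$ at $z$.

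The central claim is then $h(p)\ge h(z)$. I would prove this by contradiction: if $h(p)<h(z)$, continuity of $h$ gives an open neighborhood $U$ of $p$ contained in $L_h(z)\subset\bar L_h(z)$; but any such $U$ contains points $u$ with $\langle g,u-z\rangle>0$, which contradicts $g\in N(\bar L_h(z),z)$. Once $h(p)\ge h(z)$ is in hand, the $L$-Lipschitz property gives
\begin{equation*}
h(z)\le h(p)\le h(y)+L\|p-y\|=h(y)+L\alpha = h(y)-L\langle g,y-z\rangle,
\end{equation*}
which is exactly the desired inequality after rearrangement.

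The only subtle step is the contradiction argument establishing $h(p)\ge h(z)$; everything else is routine bookkeeping. The delicacy there is that $p$ itself sits on the supporting hyperplane (so $\langle g,p-z\rangle=0$ is compatible with $p\in\bar L_h(z)$), and the contradiction must come from nearby points $u$ strictly on the wrong side. I expect this to be the main (and only real) obstacle in writing the proof carefully.
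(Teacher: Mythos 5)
Your proof is correct and follows essentially the same route as the paper: your point $p=y+\alpha g=y-\langle g,y-z\rangle g$ is exactly the paper's projection $P(y)$ of $y$ onto the supporting hyperplane $H(g)=\{x:\langle g,x-z\rangle=0\}$, and your contradiction argument for $h(p)\geq h(z)$ is precisely the paper's observation that the open set $L_h(z)$ cannot meet $H(g)$. The only cosmetic difference is that you treat the case $\langle g,y-z\rangle=0$ uniformly rather than separating it out as the paper does.
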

\begin{proof}
Let $y\in \overline{L}_{h}(z)$. Thanks to the continuity of the function $h$, we have   $h(y)-h(z)\leq 0$.
Since $z\not\in L_h(z)$,  $N(\bar{L}_h(z), z ) \not=\emptyset$.  Take $g \in N(\bar{L}_h(z), z )$ such that $\|g\|=1$, then
$$\langle g, x-z \rangle \leq 0 \quad \forall x \in \bar{L}_h(z).$$
Let $H(g)$ be the supporting hyperplane of $ \bar{L}_h(z) $ at $z$, that is defined by
$$H(g)=\{x\in \mathbb{R}^n| \quad \langle g, x-z\rangle =0\}.$$
Since $L_h(z)$ is open, $H(g) \cap L_h(z) =\emptyset$.
If $y\in H(g)$ then it is clear that (\ref{eqlem}) is true. If $y\not \in H(g)$,
let $P(y)$ be the projection of $y$ onto the hyperplane $H(g)$. Then     $h(P(y))\geq h(z)$.

On one hand,

\begin{eqnarray}
0\leq h(z)-h(y)& \leq & h(P(y))-h(y)\nonumber\\
&\leq & L\|P(y)-y\|.\label{eqlem1}
\end{eqnarray}

On the other hand,
\begin{eqnarray}
\|P(y)-y\|&=& \langle g , P(y)-y\rangle \nonumber\\
&=& \langle g, P(y)-z\rangle +\langle g, z-y\rangle \nonumber\\
&=& \langle g, z-y\rangle (\textnormal{because } P(y)\in H(g))\label{eqlem2}
\end{eqnarray}
From (\ref{eqlem1}) and (\ref{eqlem2}),
\begin{equation}
L \langle g, y-z \rangle \leq h(y) -h(z) \leq 0.\nonumber
\end{equation}
\end{proof}







We also need the following result to find the subdifferential   in some important cases. Some calculus rules, optimality conditions and minimization methods concerning   these subdifferentials have been studied in \cite{De1998,GP1973,GRo1998,Ju1998,Ki2001,Ko2003,Pe1998,Pe2000}.

\begin{lemma}(\cite{Ki2001})
\label{lem3}
Suppose $\varphi(x)=a(x)/b(x)$ for all $x\in dom \varphi$, where $a$ is a convex function, $b$ is finite and positive on $dom \varphi$, $dom \varphi$ is convex and one of the following conditions holds
\begin{enumerate}
\item[(a)] $b$ is affine;
\item[(b)] $a$ is nonnegative on $dom \varphi$ and $b$ is concave;
\item[(c)] $a$ is nonpositive on $dom \varphi$ and $b$ is convex.
\end{enumerate}
Then $\varphi$ is quasiconvex and $\partial (a-\alpha b) (x)$ is a subset of $\partial^{*}\varphi(x)$ for $\alpha=\frac{a(x)}{b(x)}$.
\end{lemma}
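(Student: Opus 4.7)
The plan is to establish the two claims separately: first quasiconvexity of $\varphi$, then the subdifferential inclusion. The unifying identity is that, since $b>0$ on $\text{dom}\,\varphi$, the inequality $\varphi(y)\le\alpha$ is equivalent to $(a-\alpha b)(y)\le 0$, and strict inequality is preserved. So in both parts the game reduces to checking convexity of $a-\alpha b$ for an appropriate $\alpha$ and then applying standard convex-analysis tools.

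For quasiconvexity, I would show that every sublevel set $\{y:\varphi(y)\le\alpha\}$ coincides with $\{y:(a-\alpha b)(y)\le 0\}$ and is convex. In case (a), $-\alpha b$ is affine for every $\alpha\in\R$, so $a-\alpha b$ is convex. In case (b), $\varphi\ge 0$, so the sublevel set is empty for $\alpha<0$; for $\alpha\ge 0$, $-\alpha b$ is convex because $\alpha\ge 0$ and $b$ is concave, whence $a-\alpha b$ is convex. Symmetrically in case (c), $\varphi\le 0$ makes the sublevel set the whole domain for $\alpha\ge 0$, and for $\alpha<0$, $-\alpha b$ is a nonnegative multiple of a convex function, hence convex. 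In each subcase the zero-sublevel set of a convex function is convex, which yields quasiconvexity of $\varphi$.

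For the subdifferential inclusion, I would fix $x\in\text{dom}\,\varphi$, set $\alpha=a(x)/b(x)=\varphi(x)$, and observe that in case (a) $\alpha$ is arbitrary, in case (b) $\alpha\ge 0$ (since $a\ge 0$ and $b>0$), and in case (c) $\alpha\le 0$. The case analysis above then shows that $a-\alpha b$ is convex in each situation, so $\partial(a-\alpha b)(x)$ is the usual convex subdifferential. The crucial numerical fact is $(a-\alpha b)(x)=0$. For any $y\in L_\varphi(x)$, $\varphi(y)<\alpha$ gives $(a-\alpha b)(y)<0$, and the convex subgradient inequality applied to $g\in\partial(a-\alpha b)(x)$ yields
$$\langle g,y-x\rangle\le (a-\alpha b)(y)-(a-\alpha b)(x)=(a-\alpha b)(y)<0,$$
which is exactly the defining inequality for $g\in\partial^*\varphi(x)$.

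The main obstacle is not conceptual but bookkeeping: one must verify convexity of $a-\alpha b$ with the correct sign of $\alpha$ in conditions (b) and (c). The positivity of $b$ is used twice, once to transform the fractional inequality into a linear-in-$\alpha$ inequality, and once to pin down the sign of $\alpha=a(x)/b(x)$ from the sign of $a(x)$; after these two observations the argument reduces to a textbook convex subgradient manipulation.
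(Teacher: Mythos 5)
The paper offers no proof of this lemma at all --- it is imported verbatim from Kiwiel \cite{Ki2001} and used as a black box, so there is nothing internal to compare against. Your argument is correct and is the standard proof of that result: the equivalence of $\varphi(y)\le\alpha$ with $(a-\alpha b)(y)\le 0$ (valid since $b>0$), the sign bookkeeping showing $a-\alpha b$ is convex for the relevant $\alpha$ in each of the three cases, the evaluation $(a-\alpha b)(x)=0$ at $\alpha=a(x)/b(x)$, and the convex subgradient inequality yielding the strict inequality $\langle g,y-x\rangle<0$ for $y\in L_\varphi(x)$ all check out.
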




\section{ A Parallel Algorithm and Its Convergence}
  For presentation of the algorithm and its convergence, we make the following assumptions:

   \textbf{Assumptions}
\begin{enumerate}
\item[(A1)] For every $x\in C$, the function $f(x,.)$ is continuous, quasiconvex on $\R^n$, and $f(.,.)$ is upper semicontinuous on an open set containing $C\times C$;
\item[(A2)] The bifunction $f$ is pseudomonotone on $C$, that is
\begin{equation}
f(x,y)\geq 0 \Rightarrow f(y,x)\leq 0 \quad \forall x, y \in C,\nonumber
\end{equation}
and paramonotone on $C$ with respect to $S(EP)$, that is
\begin{equation}
x\in S(EP), y\in C \textnormal{ and } f(x,y)=f(y,x)=0 \Rightarrow y\in S(EP).\nonumber
\end{equation}
The paramonotonicity of a bifunction is an extension of that for an operator, see e.g., \cite{Ju1998}, which has been used in some papers, see, for example, \cite{AM2014,Sa2011,Yen2019}

\item[(A3)] The solution set $S(EP)$ is nonempty.


\end{enumerate}

 For simplicity of notation, for $\omega^T = (\omega_1,...,\omega_m)$ with $\omega_i > 0$ for every $i$ and $\sum_{i=1}^m \omega_i= 1$, let us define the operator $P_\omega$  as $P_\omega(x) := \sum_{i=1}^m \omega_i P_{C_i} (x)$ for every $x$. It is easy to see that $P_\omega$ is nonexpansive for any $\omega$. Since
 $C \subseteq C_i$ for every $i$, it follows that  $P_{C_i}(x) = P_\omega(x)=x$ for every  $x\in C$.
For this operator  we have the following result.

\begin{lemma}(\cite{De1998},\cite{DM2016})
\label{lem3.1D}
If $0<\omega_i<1$ for all $i=1,2,\dots,n$ and  $\bar{x}$ is a fixed poind of $P_\omega$, that is $\bar{x} = P_\omega(\bar{x})$, then $\bar{x} \in C = \cap_{i=1}^m C_i$.
\end{lemma}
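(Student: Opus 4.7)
The plan is to exploit the fact that each metric projection $P_{C_i}$ is \emph{firmly} nonexpansive, not merely nonexpansive. Specifically, for any $y \in C_i$ and any $x \in \R^n$ one has the standard characterization
\begin{equation*}
\|P_{C_i}(x) - y\|^2 + \|P_{C_i}(x) - x\|^2 \leq \|x - y\|^2.
\end{equation*}
Since $C \subseteq C_i$ for every $i$, picking any $y \in C$ (which is nonempty by assumption (A3)) gives a common reference point $y$ lying in every $C_i$.

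Applying the firm nonexpansiveness inequality at $x = \bar{x}$, multiplying by $\omega_i > 0$ and summing over $i$, I obtain
\begin{equation*}
\sum_{i=1}^m \omega_i \|P_{C_i}(\bar{x}) - y\|^2 + \sum_{i=1}^m \omega_i \|P_{C_i}(\bar{x}) - \bar{x}\|^2 \leq \|\bar{x} - y\|^2.
\end{equation*}
On the other hand, using the fixed-point hypothesis $\bar{x} = P_\omega(\bar{x}) = \sum_i \omega_i P_{C_i}(\bar{x})$ and the convexity of $\|\cdot\|^2$ (Jensen's inequality applied to the convex combination with weights $\omega_i$), I get
\begin{equation*}
\|\bar{x} - y\|^2 = \Bigl\|\sum_{i=1}^m \omega_i\bigl(P_{C_i}(\bar{x}) - y\bigr)\Bigr\|^2 \leq \sum_{i=1}^m \omega_i \|P_{C_i}(\bar{x}) - y\|^2.
\end{equation*}

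Combining the two displays, the quantity $\sum_i \omega_i \|P_{C_i}(\bar{x}) - y\|^2$ cancels and I am left with
\begin{equation*}
\sum_{i=1}^m \omega_i \|P_{C_i}(\bar{x}) - \bar{x}\|^2 \leq 0.
\end{equation*}
Since every $\omega_i$ is strictly positive, each summand must vanish, so $P_{C_i}(\bar{x}) = \bar{x}$ for all $i$. This means $\bar{x} \in C_i$ for every $i$, and therefore $\bar{x} \in \cap_{i=1}^m C_i = C$.

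I do not anticipate any real obstacle here; the only subtle point is to recognize that firm nonexpansiveness (rather than plain nonexpansiveness) is the right tool, because it produces the extra term $\|P_{C_i}(\bar{x}) - \bar{x}\|^2$ that, after cancellation against the Jensen estimate, forces $\bar{x}$ to be a fixed point of each individual $P_{C_i}$. The strict positivity of the weights $\omega_i$ is used exactly once, at the end, to pass from the weighted sum being zero to each term being zero.
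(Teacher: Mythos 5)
Your proof is correct. Note that the paper itself does not prove this lemma at all; it is quoted from the cited references (Deutsch--Yamada and Duc--Muu), so there is no in-paper argument to compare against. Your argument via firm nonexpansiveness of each $P_{C_i}$ combined with Jensen's inequality for $\|\cdot\|^2$ is the standard proof of this fact, and every step checks out: the characterization $\|P_{C_i}(\bar{x})-y\|^2+\|P_{C_i}(\bar{x})-\bar{x}\|^2\le\|\bar{x}-y\|^2$ for $y\in C_i$ is valid, the weighted sum uses $\sum_i\omega_i=1$, and the cancellation forces each $\|P_{C_i}(\bar{x})-\bar{x}\|$ to vanish because $\omega_i>0$. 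The one point worth making explicit is that the conclusion genuinely requires $C=\cap_i C_i\neq\emptyset$ (for disjoint sets a fixed point of $P_\omega$ can exist without lying in the empty intersection), and you correctly identified and used this: the reference point $y$ must lie in \emph{every} $C_i$ simultaneously, which is exactly where nonemptiness of $C$ (guaranteed by the paper's standing assumptions) enters.
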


  Now we are in a position to describe an algorithm for  solving equilibrium ($EP$), where the  feasible domain $C := \cap_{j=1}^m C_j$. The algorithm is a projection-subgradient one that takes the projection  independently on each component set.\\

\begin{algorithm}[H]\label{Alg1}
 Take real sequences $\{\alpha_k\}$, $\{\lambda_k\}$ and positive numbers $\omega_1,\dots,\omega_m$ satisfying  the following conditions
\begin{eqnarray}
 &\quad \alpha_k >0 \quad \forall k\in \N,&\nonumber\\
&\sum_{k=1}^{\infty} \alpha_k=+\infty, \quad \sum_{k=1}^{\infty} \alpha_k^2 <+\infty.&\nonumber
\\&0<\underline{\lambda}\leq \lambda_k \ \leq \overline{\lambda}<1 \quad \forall k\in \N,&\nonumber
\\&0<\omega_i<1 \quad \forall i=1,\dots,m, \sum_{i=1}^m \omega_i=1&\nonumber
\end{eqnarray}
\\\textbf{Initial Step:}  choose $x^0\in \R^n$, let $k=0$.
\\\textbf{Step k (0,1...):} Having $x^k \in \ \R^n$,
take $$g^k\in \partial_2^* f(x^k,x^k):=\{g\in \R^n :\langle g,y-x^k \rangle <0 \quad \forall y\in L_{f_k}(x^k)\}.$$

If $g^k = 0$ and $x^k\in C$,  \textbf{stop}: $x^k$ is a solution.

If $g^k \not=0$, normalize $g^k$ to obtain   $\|g^k\|=1.$

Compute
\begin{equation}
x^{k+1} =(1-\lambda_k)x^k+\lambda_k
P_{\omega}(x^k -\alpha_k g^k)
\end{equation}

If $x^{k+1}=x^k$ and $x^k\in C$ then \textbf{stop}: $x^k$ is a solution.

Else update $k\longleftarrow k+1$.
\caption{Subgradient-Projection Algorithm}
\end{algorithm}
\vskip0.5cm


 The sequence of the iterates generated by the algorithm has the following properties:

 \begin{proposition}\label{prop3}
 For every $z\in C=\cap_{i=1}^m C_i$, and $k\in \mathbb{N}$, the following inequality holds
 \begin{equation}
 \label{eq4.1}
 \|x^{k+1}-z\|^2 \leq \|x^k-z\|^2 +2\lambda_k \alpha_k \langle g^k, z-x^k\rangle +\lambda_k \alpha_k^2 -\lambda_k(1-\lambda_k) \|x^k - P_\omega (x^k -\alpha_k g^k)\|^2.
 \end{equation}\nonumber

 \end{proposition}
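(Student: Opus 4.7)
The plan is to exploit two structural facts about the iteration: first, that $x^{k+1}$ is an explicit convex combination of $x^k$ and $y^k:=P_\omega(x^k-\alpha_k g^k)$, and second, that $P_\omega$ is nonexpansive and fixes every point of $C$ (both of which are recorded just before the proposition statement, together with $\|g^k\|=1$ after normalization).

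First, I would apply the standard identity for convex combinations,
\begin{equation*}
\|(1-\lambda)a+\lambda b\|^2=(1-\lambda)\|a\|^2+\lambda\|b\|^2-\lambda(1-\lambda)\|a-b\|^2,
\end{equation*}
with $a=x^k-z$, $b=y^k-z$ and $\lambda=\lambda_k$. This immediately gives
\begin{equation*}
\|x^{k+1}-z\|^2=(1-\lambda_k)\|x^k-z\|^2+\lambda_k\|y^k-z\|^2-\lambda_k(1-\lambda_k)\|x^k-y^k\|^2,
\end{equation*}
which already delivers the last term on the right-hand side of \reff{eq4.1}.

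Next, since $z\in C=\cap_{i=1}^m C_i$, one has $P_{C_i}(z)=z$ for each $i$, hence $P_\omega(z)=z$. Using the nonexpansivity of $P_\omega$,
\begin{equation*}
\|y^k-z\|^2=\|P_\omega(x^k-\alpha_k g^k)-P_\omega(z)\|^2\leq\|x^k-\alpha_k g^k-z\|^2.
\end{equation*}
Expanding the right-hand side and invoking $\|g^k\|=1$ yields
\begin{equation*}
\|y^k-z\|^2\leq\|x^k-z\|^2+2\alpha_k\langle g^k,z-x^k\rangle+\alpha_k^2.
\end{equation*}

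Finally, I would substitute this estimate into the convex-combination identity and collect terms; the $(1-\lambda_k)\|x^k-z\|^2$ and $\lambda_k\|x^k-z\|^2$ pieces fuse into $\|x^k-z\|^2$, producing exactly \reff{eq4.1}. There is no real obstacle here—each step is an algebraic identity or a one-line application of a property already established; the only point requiring care is to remember to normalize $g^k$ so that $\|g^k\|^2=1$ (otherwise the $\alpha_k^2$ term would come with an extra factor $\|g^k\|^2$).
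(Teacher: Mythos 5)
Your proposal is correct and follows essentially the same route as the paper: the same convex-combination identity with $a=x^k-z$, $b=P_\omega(x^k-\alpha_k g^k)-z$, $t=\lambda_k$, followed by the bound $\|P_\omega(x^k-\alpha_k g^k)-z\|^2\leq\|x^k-\alpha_k g^k-z\|^2$ and expansion using $\|g^k\|=1$. The only (immaterial) difference is that you obtain this bound in one step from the nonexpansivity of $P_\omega$ together with $P_\omega(z)=z$, whereas the paper derives it by first applying the convexity of $\|\cdot\|^2$ to the weighted sum $\sum_{i=1}^m \omega_i\bigl(P_{C_i}(x^k-\alpha_k g^k)-z\bigr)$ and then the nonexpansivity of each individual $P_{C_i}$ with $z\in C_i$.
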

 \begin{proof}
 Let $z\in C$, by using the elementary equality
 $$ \|(1-t)a + tb\|^2 = (1-t)\|a\|^2 + t\|b\|^2 -t(1-t)\|a -b\|^2$$
 with $a=x^k-z$, $b= P_\omega(x^k-\alpha_k g^k) - z$, $t=\lambda_k$,
  we have
 \begin{eqnarray}
 \|x^{k+1}-z\|^2 &=& \|x^k +\lambda_k \Big[ P_\omega (x^k -\alpha_k g^k) -x^k\Big]-z\|^2\nonumber
 \\ &=& (1-\lambda_k) \|x^k -z\|^2 +\lambda_k\|P_\omega (x^k -\alpha_k g^k) -z\|^2\nonumber
 \\&& -\lambda_k(1-\lambda_k) \|x^k -P_\omega (x^k -\alpha_k g^k)\|^2.\label{eq4.3}
 \end{eqnarray}
 In addition,
 \begin{eqnarray}
 &&\|P_\omega (x^k -\alpha_k g^k) -z\|^2\nonumber
 \\ &=&\| \sum_{i=1}^m \omega_i(P_{C_i} (x^k -\alpha_k g^k)-z)\|^2 \nonumber
 \\ &\leq& \sum_{i=1}^m \omega_i \|P_{C_i} (x^k -\alpha_k g^k)-z\|^2\nonumber
 \\ &\leq&  \sum_{i=1}^m \omega_i \|x^k -\alpha_k g^k -z\|^2\nonumber
 \\ &=& \|x^k -\alpha_k g^k -z\|^2\nonumber
 \\ &= & \|x^k-z\|^2 -2\alpha_k \langle g^k, x^k-z\rangle +\alpha_k^2.\nonumber
 \end{eqnarray}
 From (\ref{eq4.3}) and the last equality, it follows that
 \begin{equation}
 \|x^{k+1}-z\|^2 \leq \|x^k-z\|^2 +2\lambda_k \alpha_k \langle g^k, z-x^k\rangle +\lambda_k \alpha_k^2 -\lambda_k(1-\lambda_k) \|x^k - P_\omega (x^k -\alpha_k g^k)\|^2.\nonumber
 \end{equation}
 \end{proof}

 \begin{lemma}
 \label{lem4.1}

 \begin{equation}
 \liminf_{k\rightarrow +\infty} \langle g^k, x^k -z\rangle \leq 0,  \  \forall z\in C.
 \end{equation}
  \end{lemma}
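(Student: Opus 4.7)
The plan is to argue by contradiction, using the one-step estimate of Proposition \ref{prop3} as a quasi-Fejér-type recursion and exploiting the standard divergent-series/summable-squares conditions imposed on $\{\alpha_k\}$.

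First I would rewrite the inequality in Proposition \ref{prop3} by discarding the nonpositive term $-\lambda_k(1-\lambda_k)\|x^k-P_\omega(x^k-\alpha_k g^k)\|^2$ and moving the inner-product term to the left, obtaining
\begin{equation*}
2\lambda_k\alpha_k\,\langle g^k,x^k-z\rangle \;\leq\; \|x^k-z\|^2-\|x^{k+1}-z\|^2+\lambda_k\alpha_k^2
\end{equation*}
for every $z\in C$ and every $k$. Using $\lambda_k\leq\overline\lambda<1$ on the last summand, summing this inequality from $k=K$ to $k=N$ telescopes the difference of squared distances, giving
\begin{equation*}
2\sum_{k=K}^{N}\lambda_k\alpha_k\,\langle g^k,x^k-z\rangle \;\leq\; \|x^K-z\|^2+\sum_{k=K}^{N}\alpha_k^2.
\end{equation*}

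Next I would suppose for contradiction that $\liminf_{k\to\infty}\langle g^k,x^k-z\rangle>0$ for some $z\in C$. Then there exist $\varepsilon>0$ and an index $K$ such that $\langle g^k,x^k-z\rangle\geq\varepsilon$ for all $k\geq K$, and since $\lambda_k\geq\underline\lambda>0$, the left-hand side above is at least $2\underline\lambda\varepsilon\sum_{k=K}^{N}\alpha_k$. Letting $N\to\infty$, the hypothesis $\sum\alpha_k=+\infty$ forces the left side to diverge, while the right side stays bounded because $\sum\alpha_k^2<+\infty$. This contradiction establishes the lemma.

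I do not expect any real obstacle: the bound from Proposition \ref{prop3} is already of the Robbins–Siegmund / Polyak form, so the whole argument reduces to a telescoping sum together with the two summability conditions on $\{\alpha_k\}$. The only point to be slightly careful about is to ensure that the remainder $\lambda_k\alpha_k^2$ is controlled by $\alpha_k^2$ (which is immediate from $\lambda_k<1$) and that $\lambda_k\alpha_k\geq\underline\lambda\alpha_k$ on the left-hand side, so the divergent series $\sum\alpha_k$ actually appears with a uniformly positive coefficient.
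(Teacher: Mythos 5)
Your proposal is correct and follows essentially the same route as the paper: both start from Proposition \ref{prop3}, drop the nonpositive term, telescope the squared distances, and combine $\sum\alpha_k=+\infty$ with $\sum\alpha_k^2<+\infty$ to force $\liminf_k\langle g^k,x^k-z\rangle\leq 0$. If anything, your version is slightly tidier: by keeping $\lambda_k$ on the left-hand side and only invoking $\lambda_k\geq\underline{\lambda}$ at the contradiction stage, you avoid the paper's step of replacing $1/\lambda_k$ by $1/\underline{\lambda}$ in front of the difference $\|x^k-z\|^2-\|x^{k+1}-z\|^2$, which is not justified when that difference is negative.
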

  \begin{proof}
  From Proposition \ref{prop3} and $0 <\underline{\lambda}\leq \lambda_k\leq \overline{\lambda}<1$, we obtain
  $$2\alpha_k \langle g^k, x^k-z\rangle  \leq \frac{1}{\underline{\lambda}}(\|x^k-z\|^2 -\|x^{k+1}-z\|^2) +\alpha_k^2.$$
  Applying this inequality for every $k=1,...\infty$, and summing up we obtain
  $$\sum_{k=1}^{\infty} \alpha_k \langle g^k, x^k-z\rangle <+\infty,$$
  which together with $\sum_{k=1}^{\infty} \alpha_k =+\infty$, implies
  \begin{equation}
 \liminf_{k\rightarrow +\infty} \langle g^k, x^k -z\rangle \leq 0.\nonumber
 \end{equation}
  \end{proof}
 We have the following convergence result.

  \begin{theorem}   Under the assumptions (A1 )- (A3) it holds that

(i) If Algorithm \ref{Alg1}  terminates at iteration $k$, then $x^k$ is a solution of ($EP$);

(ii) If the algorithm does not terminate, then there exists  a subsequence of $\{x^k\}$  converges to a solution of ($EP)$ whenever $\{x^k\}$ is bounded. In addition, if Problem($EP$) is uniquely solvable, in particular, $f$ is strongly monotone, the whole sequence $\{x^k\}$ converges to the   solution.
  \end{theorem}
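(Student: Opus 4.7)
For part (i), I will dispatch the two stopping conditions. If $g^k = 0$ with $x^k \in C$, Lemma~\ref{lem1} identifies $x^k$ as a global minimizer of $f(x^k,\cdot)$, so $f(x^k,y) \geq f(x^k,x^k) = 0$ for every $y \in C$. If instead $x^{k+1} = x^k$ with $x^k \in C$, the update reduces to $P_\omega(x^k - \alpha_k g^k) = x^k$. Setting $q_i := P_{C_i}(x^k - \alpha_k g^k) - x^k$, the projection variational inequality at $y = x^k \in C_i$ yields $\|q_i\|^2 \leq -\alpha_k \langle g^k, q_i\rangle$; weighting by $\omega_i$ and invoking $\sum_i \omega_i q_i = 0$ forces $q_i = 0$ for every $i$. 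Hence $P_{C_i}(x^k - \alpha_k g^k) = x^k$, so the projection inequality gives $\langle g^k, y - x^k\rangle \geq 0$ for all $y \in C_i$, hence all $y \in C$, and the defining inequality of $\partial^*$ excludes any such $y$ from $L_{f_k}(x^k)$. Consequently $f(x^k, y) \geq 0$ for all $y \in C$.

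For part (ii), assume non-termination and boundedness of $\{x^k\}$; fix $x^* \in S(EP)$. Since $\sum \alpha_k^2 < \infty$ gives $\alpha_k \to 0$, Lemma~\ref{lem4.1} applied with $z = x^*$ furnishes a subsequence on which $\langle g^{k_j}, x^{k_j} - x^*\rangle \to L \leq 0$. By successive extractions I arrange, additionally, that $x^{k_j} \to \bar{x}$ (Bolzano--Weierstrass) and that the Fej\'er increment $\|x^{k_j} - x^*\|^2 - \|x^{k_j+1} - x^*\|^2$ tends to zero; this last property is obtained by extracting so that $\|x^{k_j} - x^*\|^2$ attains an appropriate liminf, since the one-step bound $\|x^{k+1} - x^*\|^2 \leq \|x^k - x^*\|^2 + O(\alpha_k)$ from Proposition~\ref{prop3} together with $\alpha_k \to 0$ pins $\|x^{k_j+1} - x^*\|^2$ to the same limit. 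Feeding this back into Proposition~\ref{prop3} and using $\lambda_k(1-\lambda_k) \geq \underline{\lambda}(1-\overline{\lambda}) > 0$ delivers $\|x^{k_j} - P_\omega(x^{k_j} - \alpha_{k_j} g^{k_j})\| \to 0$, and the estimate $\|P_\omega(x^{k_j} - \alpha_{k_j} g^{k_j}) - P_\omega(x^{k_j})\| \leq \alpha_{k_j} \to 0$ transfers this to $\|x^{k_j} - P_\omega(x^{k_j})\| \to 0$. Continuity of $P_\omega$ and Lemma~\ref{lem3.1D} then yield $\bar{x} = P_\omega(\bar{x}) \in C$.

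To conclude $\bar{x} \in S(EP)$, I first show $f(\bar{x}, x^*) \geq 0$ by contradiction: if $f(\bar{x}, x^*) < 0$, continuity of $f(\bar{x}, \cdot)$ supplies $\rho > 0$ with $B(x^*, \rho) \subset L_{f(\bar{x}, \cdot)}(\bar{x})$, and the upper semicontinuity of $f$ on an open neighborhood of $C \times C$, combined with a finite subcover of $\overline{B}(x^*, \rho/2)$, promotes this to $B(x^*, \rho/2) \subset L_{f_{k_j}}(x^{k_j})$ for $j$ large; Lemma~\ref{lem3.1a} then delivers $\langle g^{k_j}, x^{k_j} - x^*\rangle > \rho/2$, contradicting $L \leq 0$. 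Combined with $f(x^*, \bar{x}) \geq 0$ (since $x^* \in S(EP)$ and $\bar{x} \in C$), two applications of pseudomonotonicity pin $f(\bar{x}, x^*) = f(x^*, \bar{x}) = 0$, and paramonotonicity then yields $\bar{x} \in S(EP)$. When $S(EP) = \{x^*\}$ (in particular when $f$ is strongly monotone), applying the above to any subsequence of the bounded sequence $\{x^k\}$ produces a sub-subsequence converging to $x^*$, so every cluster point of $\{x^k\}$ equals $x^*$ and hence $x^k \to x^*$.

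The most delicate step is the nested subsequence extraction in the middle of part (ii): combining the Lemma~\ref{lem4.1} output (non-positive limit of $\langle g^{k_j}, x^{k_j} - x^*\rangle$) with the liminf selection for $\|x^k - x^*\|^2$ on a single subsequence, which is what drives the Fej\'er residual $\|x^{k_j} - P_\omega(x^{k_j} - \alpha_{k_j} g^{k_j})\|$ to zero and thereby delivers $\bar{x} \in C$. The decisive mechanism is the $O(\alpha_k)$-drift estimate from Proposition~\ref{prop3} together with $\alpha_k \to 0$, which, once $\|x^{k_j} - x^*\|^2$ has been driven to an appropriate liminf, forces $\|x^{k_j+1} - x^*\|^2$ to the same value and so extracts the desired residual decay from the quadratic term in Proposition~\ref{prop3}.
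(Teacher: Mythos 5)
Your part (i) is correct, and in fact slightly sharper than the paper's own argument: by testing the projection inequality at $y=x^k$ and averaging, you deduce that each individual projection $P_{C_i}(x^k-\alpha_k g^k)$ equals $x^k$, whereas the paper only averages the inequalities for general $y\in C$; both routes end at $\langle g^k, y-x^k\rangle\geq 0$ for all $y\in C$ and hence $f(x^k,y)\geq 0$. Likewise, your contradiction argument for $f(\bar x,x^*)=0$, the pseudomonotonicity/paramonotonicity endgame, and the use of Lemma \ref{lem3.1D} to place $\bar x$ in $C$ all match the paper.

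The gap is precisely at the step you flag as delicate. You need a single subsequence along which simultaneously (a) $\langle g^{k_j},x^{k_j}-x^*\rangle$ has non-positive limit and (b) the increment $\|x^{k_j}-x^*\|^2-\|x^{k_j+1}-x^*\|^2$ has non-positive limsup, so that Proposition \ref{prop3} kills the residual $\|x^{k_j}-P_\omega(x^{k_j}-\alpha_{k_j}g^{k_j})\|$. Your pinning mechanism for (b) only works if $\|x^{k_j}-x^*\|^2$ converges to the liminf of $\|x^k-x^*\|^2$ over the \emph{whole} sequence, since that global liminf is what bounds $\|x^{k_j+1}-x^*\|^2$ from below. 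But you start from the subsequence produced by Lemma \ref{lem4.1} for property (a); refining it so that $\|x^{k_j}-x^*\|^2$ converges only yields some limit $\ell'\geq \ell:=\liminf_k\|x^k-x^*\|^2$, and the squeeze then gives $\limsup_j\bigl(\|x^{k_j}-x^*\|^2-\|x^{k_j+1}-x^*\|^2\bigr)\leq \ell'-\ell$, which need not be $\leq 0$. Choosing instead the subsequence attaining the global liminf $\ell$ secures (b) but loses all control of (a). Since $\{\|x^k-x^*\|^2\}$ is not known to converge (the per-step drift involves $\alpha_k\langle g^k,x^k-x^*\rangle$ with $\sum_k\alpha_k=\infty$), the two liminfs may live on disjoint index sets, and no order of successive extraction recovers both. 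The paper closes this with a dichotomy: either the tail is Fej\'er monotone with respect to some solution, so $\|x^k-x^*\|^2$ converges and (b) holds along the entire tail, which the Lemma \ref{lem4.1} subsequence then inherits for free; or there are infinitely many $k$ with $\|x^k-x^*\|<\|x^{k+1}-x^*\|$, and along those indices Proposition \ref{prop3} delivers both the residual decay and $\langle g^k,x^k-x^*\rangle\leq \alpha_k/2$ simultaneously. You need this (or an equivalent device) to make the middle of part (ii) work. A secondary issue: in the uniqueness case, your construction yields one subsequence converging to $x^*$, not that every cluster point equals $x^*$, because the argument is tied to the special subsequence from Lemma \ref{lem4.1}; whole-sequence convergence still requires the Fej\'er-type bookkeeping between the selected indices that the paper carries out.
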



  \begin{proof}

  (i) Suppose that the algorithm terminates at iteration $k$. Then, if $0\in \partial_2^* f(x^k,x^k)$ and $x^k\in C$, we have
$$x^k \in \textnormal{argmin}_{y\in \R^n} f(x^k,y).$$
Hence, $f(x^k,y)\geq f(x^k,x^k)=0$ for every $y\in \R^n$. Since $x^k\in C$,  it is a solution of ($EP$).

If $x^{k+1}=x^k$ and $x^k\in C$,  we have
\begin{eqnarray}
P_\omega (x^k -\alpha_k g^k) &=&x^k.\nonumber
\\ \Longleftrightarrow  \sum_{i=1}^m \omega_i P_{C_i}(x^k -\alpha_k g^k)&=&x^k. \nonumber
 \end{eqnarray}

 For every $y\in C\subseteq C_i$ and  every  $i\in \left\{1,\dots,m\right\}$, it holds that
 \begin{equation}
 \langle x^k -\alpha_kg^k -  P_{C_i}(x^k -\alpha_k g^k), y - P_{C_i}(x^k -\alpha_k g^k)\rangle \leq 0.\nonumber
\end{equation}
Equivalently
 \begin{equation}
 \langle x^k -  P_{C_i}(x^k - \alpha_k g^k), y - P_{C_i}(x^k -\alpha_k g^k)\rangle \leq \langle \alpha_k g^k, y - P_{C_i}(x^k -\alpha_k g^k)\rangle.\nonumber
\end{equation}

Multiplying by $\omega_i$ and summing up we obtain

 \begin{equation}
 \sum_{i=1}^m \omega_i \langle x^k -  P_{C_i}(x^k - \alpha_k g^k), y - P_{C_i}(x^k -\alpha_k g^k)\rangle \leq \sum_{i=1}^m \omega_i \langle \alpha_k g^k, y - P_{C_i}(x^k -\alpha_k g^k)\rangle.\nonumber
\end{equation}

By a simple computation, using $\sum_{i=1}^m \omega_i P_{C_i}(x^k -\alpha_k g^k)=x^k$ and $\sum_{i=1}^m \omega_i =1$, we arrive at

\begin{equation}
- \|x^k\|^2 + \|x^k\|^2 \leq \sum_{i=1}^m \omega_i \alpha_k\langle g^k, y-P_{C_i} (x^k - \alpha_k g^k)\rangle = \alpha_k
\langle g^k, y-x^k\rangle. \nonumber
\end{equation}
Since $g^k\in \partial^*f(x^k,x^k)$, the last inequality $\langle g^k, y-x^k\rangle \geq 0, \ \forall y\in C$ implies $f(x^k,y) \geq f(x^k,x^k) = 0$ for every $y\in C$, which means that $x^k$ is a solution of (EP).

(ii)  We consider two cases.

  \textbf{Case 1:} There exists a  solution $x^*\in S(EP)$ and an index $k_0$ such that for $k\geq k_0$,
  $$\|x^{k+1} -x^*\| \leq \|x^k -x^*\|.$$
  By the nonnegativity of $\|x^k -x^*\|$, we conclude that the sequence $\left\{\|x^k -x^*\|\right\}$ is convergent.

 Moreover, from Proposition \ref{prop3} it follows that
 \begin{eqnarray}
 &&\lambda_k(1-\lambda_k) \|x^k - P_\omega (x^k -\alpha_k g^k)\|^2 \nonumber\\&\leq& \|x^k -x^*\|^2 -\|x^{k+1}-x^*\|^2 -2\lambda_k \alpha_k \langle g^k, x^k -x^*\rangle +\lambda_k\alpha_k^2.\nonumber
 \end{eqnarray}
 When $k$ goes to infinity, the right hand side of the above inequality goes to $0$.  Since $0 <\underline{\lambda}\leq \lambda_k\leq \overline{\lambda}<1$,  we obtain
 $$\lim_{k \rightarrow \infty} \|x^k - P_\omega (x^k -\alpha_k g^k)\| =0.$$

 We also have that
 \begin{eqnarray}
 \|x^k -P_\omega (x^k)\| &\leq& \|x^k - P_\omega (x^k -\alpha_k g^k)\| + \|P_\omega (x^k -\alpha_k g^k)- P_\omega (x^k)\|\nonumber
 \\&\leq & \|x^k - P_\omega (x^k -\alpha_k g^k)\| + \| x^k  -\alpha_k g^k - x^k\|\nonumber
 \\&\leq & \|x^k - P_\omega (x^k -\alpha_k g^k)\| +\alpha_k.
 \end{eqnarray}
 Since $\lim_{k \rightarrow \infty} \alpha_k =0$,
\begin{equation}\lim_{k \rightarrow \infty} \|x^k - P_\omega (x^k)\|=0.\label{eq4.5}
\end{equation}
From Lemma \ref{lem4.1} follows
\begin{equation}
 \liminf_{k\rightarrow \infty} \langle g^k, x^k -x^*\rangle \leq 0.\label{eq4.8}
 \end{equation}

 Let $\left\{x^{k_i}\right\}$ be a subsequence of $\left\{x^k\right\}$ such that
 $$\lim_{i\rightarrow \infty} \langle g^{k_i}, x^{k_i} -x^*\rangle = \liminf_{k\rightarrow \infty} \langle g^k, x^k -z\rangle.$$
 Since $\{x^k\}$ is bounded,  $\{x^{k_i}\}$ is bounded too. Let $\overline{x}$ be a limit point of $\{x^{k_i}\}$, and without loss of generality we assume that
 \begin{equation}
 \lim_{i\rightarrow \infty} x^{k_i}=\overline{x}.\label{eq4.6}
 \end{equation}
It is clear that
\begin{eqnarray}
\|x^{k_i}-P_\omega (\overline{x}\|&\leq & \|x^{k_i} -P_\omega (x^{k_i})\|+\|P_\omega (\overline{x}) -P_\omega (x^{k_i})\|\nonumber
\\&\leq & \|x^{k_i} -P_\omega (x^{k_i})\| +\|\overline{x}- x^{k_i}\|.
\end{eqnarray}
Thanks to (\ref{eq4.5}) and (\ref{eq4.6}), we have
 \begin{equation}
 \lim_{i\rightarrow \infty} x^{k_i}=P_\omega (\overline{x}).\label{eq4.7}
 \end{equation}
 Combining this with (\ref{eq4.6}), we  see that $P_\omega (\overline{x})=\overline{x}$, which together with
  $\omega_i>0$ for every $i$ implies  $\overline{x}\in C$.

In addition, since $x^*$ is a solution, by pseudomonotonicity of $f$ on $C$, we have
$f(\overline{x}, x^*)\leq 0.$  We  show that $f(\overline{x}, x^*)= 0.$ In fact,
by contradiction, we assume that there exists $a>0$ such that
$$f(\overline{x}, x^*)\leq -a.$$
 Since $f(., .)$ is upper semicontinuous on an open set containing $C\times C$, there exist positive numbers $\epsilon_1, \epsilon_2$ such that,  for any $x\in B(\overline{x}, \epsilon_1)$, $y\in B(x^*, \epsilon_2)$ we have
$f(x, y)\leq -\frac{a}{2}.$

On the other hand, $\lim_{i\rightarrow \infty} x^{k_i}=\overline{x}$ implies that there exist $i_0$ such that for $i\geq i_0$, $x^{k_i}$ belongs to $B(\overline{x}, \epsilon_1)$.
So, for $i\geq i_0$ and $y\in B(x^*, \epsilon_2)$, we have
\begin{equation}\label{them}
f(x^{k_i}, y)\leq -\frac{a}{2},
\end{equation}
 which implies that $B(x^{*}, \epsilon_2)\subset  L_{f_{k_i}}(x^{k_i})$. In addition, $g^{k_i}\not=0$, because if $g^{k_i}=0$ then $f(x^{k_i},y)\geq f(x^{k_i},x^{k_i})=0$ for every $y\in \mathbb{R}^n$, which contradicts to (\ref{them}). By Lemma \ref{lem3.1a},   for $i\geq i_0$, it holds that
\begin{equation}
\langle g^{k_i}, x^{k_i}-x^{*}\rangle >\epsilon_2,\nonumber
\end{equation}
 which  contradicts  to (\ref{eq4.8}). Thus, $\overline{x}\in C$ and $f(\overline{x}, x^*)=0$.
 Again by pseudomonotonicity, we obtain $f(x^*,\overline{x}) = 0.$  Then, from paramonotonicity of $f$ it follows that $\overline{x}$ is a solution of $(EP)$.
In addition, since $\{\|x^k-x^*\|\}$ is convergent for every solution $x^*$ of $(EP)$, the sequence $\{x^k\}$ converges to $\overline{x}$ which is a solution of $(EP)$.

 \vskip0.5cm
 \textbf{Case 2:} For any solution $x^*$ of (EP)  there exists a subsequence $\{x_{k_i}\}$ of $\{x^k\}$ that satisfies
 $$\|x^{k_i}-x^*\| <\|x^{k_i+1}-x^*\|.$$


 Now again, by Proposition \ref{prop3} applying with $x^*$, we can write
 \begin{eqnarray}
 &&\lambda_{k_i}(1-\lambda_{k_i}) \|x^{k_i} - P_\omega (x^{k_i} -\alpha_{k_i} g^{k_i})\|^2 \nonumber\\&\leq& \|x^{k_i} -x^*\|^2 -\|x^{{k_i}+1}-x^*\|^2 -2\lambda_{k_i} \alpha_{k_i} \langle g^{k_i}, x^{k_i} -x^*\rangle +\lambda_{k_i}\alpha_{k_i}^2.\nonumber
 \\&\leq& -2\lambda_{k_i} \alpha_{k_i} \langle g^{k_i}, x^{k_i} -x^*\rangle +\lambda_{k_i}\alpha_{k_i}^2.\label{eq4.9}
 \end{eqnarray}

 Since $\{x^{k_i}\}$ is bounded, and $0 < \lambda_k < 1, $ $\alpha_k \to 0$, from the last inequality it follows  that
 \begin{equation}
 \lim_{i\rightarrow \infty} \|x^{k_i} - P_\omega (x^{k_i} -\alpha_{k_i} g^{k_i})\| =0.
 \end{equation}

 Also from (\ref{eq4.9})  follows
 $$  \langle g^{k_i}, x^{k_i} -x^*\rangle\leq \frac{\alpha_{k_i}}{2}.$$
\begin{equation}
 \limsup_{i\rightarrow \infty} \langle g^{k_i}, x^{k_i} -x^*\rangle \leq 0.\label{eq4.10}
 \end{equation}
 Now by the same argument as in Case 1, we see that every limit point of the sequence $\{x_{k_i}\}$ belongs to the solution set $S(EP)$.

 Suppose now that   the solution   is unique, then the sequence $\{x^k\}$ converges to the unique solution $x^*$ of $(EP)$. In fact, without loss of generality, we may assume that for $k\not\in \{k_j\}$. Then
 $$\|x^{k+1}-x^*\| \leq \|x^k-x^*\|.$$
 For any $\epsilon>0$ there exist $i_0$ such that for $i\geq i_0$,
 $$\|x^{k_i}-x^*\|<\epsilon.$$
 Furthermore, for any $k> k_{i_0}$ that does not belong to $\{k_i\}$, there exists $i_1\geq i_0$ such that $k_{i_1}<k<k_{i_1+1}$. Then
 $\|x^k-x^*\| \leq \|x^{k_{i_1}}-x^*\|<\epsilon,$
 which means  that
 $\|x^k -x^* \|<\epsilon$ for $k\geq k_{i_0}$.
 Hence,   the whole sequence $\{x^k\}$ converges to the unique solution of $(EP)$.

  \end{proof}
 {\bf Remark}.
The assumption on boundedness of the sequence $\{x^k\}$ is ensured  if either

   (C1) The set  $ C_\omega := \{ x\in \R^n: \ x = \sum_{j=1}^m \omega_j x^j: \ x^j \in C_j \ j=1,...,m \}$ is bounded, and $x^0 \in C_\omega$,

    or

 (C2)   The bifuntion $f(x,y)$ is pseudomonotone on $C_\omega$ and for each $x$,  the function $f_x(.):= f(x,.)$ is Lipschitz continuous  with constant $L_x$, and
$$C\ \cap S(C_\omega, f) \not=\emptyset,$$
 where $S(C_\omega,f) $ stands for the solution-set of the  equilibrium problem
$$\textnormal {find}\ \bar{z} \in C_\omega: \ f( \bar{z}, z) \geq 0\  \forall z\in C_\omega. \eqno(EP_\omega)$$

Indeed, according to  the algorithm, $x^{k+1} $ is a convex combination of elements of the convex set $C_\omega$,  we see that $x^{k+1} \in C_\omega$ for every $k=0, 1, ...,$, with implies that $\{x^k\}$ is bounded. \\
To see the assertion for (C2), we apply Lemma \ref{Ko1} with $h(x):= f(x^k,x)$,  $z:= x^k$ and $y=\bar{z} \in C\ \cap S(C_\omega, f)$. Then  $f(y,x^k)\geq 0$, which, by pseudomonotonicity,  implies $f(x^k, y) \leq 0$ for all $k$. Hence $y\in \bar{L}_h(x^k)$. Thus by  Lemma \ref{Ko1}, $\langle g^k, \bar{z} - x^k\rangle \leq 0$. Then, by applying Proposition \ref{prop3} with $z = \bar{z}$, we  obtain
   $$ \|x^{k+1}-\bar{z}\|^2 \leq \|x^k - \bar{z}\|^2 + \alpha^2_k$$
   Hence, $\{\|x^k - \bar{z}\|^2\}$ is convergence, and therefore $\{x^k\}$ is bounded

  In the case of optimization problem, where $f(x,y) := \varphi(y) -\varphi (x)$, Condition (C2) means that
 $\varphi$ is Lipschitz continuous
  and there exists $\bar{z}\in C$ such that $\bar{z} $ is  a minimizer of the function $\varphi(y)$ on $C_\omega$.

 Condition $(C2)$ is inspired by the paper  \cite{San2016},  where the equilibrium  problems of the form
$$\textnormal{Find} \ x^* \in P\cap Q: f(x^*,y) \geq 0 \  \forall  y\in Q,$$
 with $P, Q$ being convex sets have been studied. This common solution problem has been  attracted  much attention of researchers in recent years.

 \section{Computational Experience}
  To test the algorithm, we consider the  equilibrium problem
$$
\textnormal{Find } x^*\in C \textnormal{ such that } f(x^*,y) \geq 0 \quad \forall y\in C, \eqno (EP)
$$
where the bifunction $f(x,y)$ is defined by
\begin{equation}
f(x,y)=\left\langle Ax+b, \frac{A_1y+b_1}{c^Ty+d}-\frac{A_1x+b_1}{c^Tx+d}\right\rangle,
\end{equation}
with $A,A_1\in \R^{n\times n}$, $b,b_1,c\in \R^n$, $d\in \R$ and $C\subset \{x| \quad c^Tx+d >0\}$.

By applying  Proposition 4.1 in \cite{Ju1998} to the differentiable function
   $ g(x):= A^T (\frac{A_1x+b_1}{c^Tx + d})$
   one can easily  check that $f$ is monotone on $C$
if and only if the matrix
$$\hat{A}_1 (x)=A^T\Big[ A_1c^Tx-A_1x c^T\Big] + A^T\Big[A_1d -b_1c^T\Big]$$
is  positive semidefinite and paramonotone if $\hat{A}_1 (x)$ is symmetric for any $x\in C$.

 We test the algorithm with the following three examples, where the projection onto each component set has a closed form. For each problem we chose $\lambda_k = 1/2$ for every $k$, and $\omega_i = 1/m$ for all $i$.
We stop the computation at iteration
$k$ if $g^k = 0$ or $err1 :=\| x^k - x^{k+1}\|   < 10^{-4}$ and $err2 = \|x^k -   P_{C_1} (x^k)\| + ...+ \|x^k - P_{C_m} (x^k) \|  <10^{-1}$,  or the number of iteration exceeds 1000.

The average time and average errors for each size are reported in Tables  1, 2, 3 with different sizes, a hundred of  problems have been tested for each size.

\begin{example}
In this example, we take
$$C=C_1\cap C_2,$$
where  $C_1=\left[ 1,3 \right]^n$ and $C_2=\{x\in \mathbb{R}^n| \quad \|x\|\leq 3\}$. Each entries of $A,A_1,b,b_1,c,d$ is uniformly generated in the interval $\left[0,1\right]$.
\begin{table}[ht]
\caption{Algorithm with $\alpha_k=\frac{100}{k+1}$} 
\centering 
\begin{tabular}{c c c c c c c} 
\hline\hline 
 n &N. of prob.   & CPU-times(s)& Error1& Error 2\\ [0.5ex] 
\hline 
5&100& 5.674775& 0.000127& 0.188295& \\
10&100&  6.990620&0.000092&0.187407& \\
20&100& 8.7481101& 0.000085& 0.186929&   \\
50&100& 40.770369&0.000083& 0.186708&  \\
\hline 
\end{tabular}
\label{tab1} 
\end{table}
\end{example}
\begin{example}
In this example, we take
$$C=C_1\cap C_2\cap C_3,$$
where  $C_1=\left[ 1,3 \right]^n$, $C_2=\{x\in \mathbb{R}^n| \quad \|x\|\leq 3\}$ and $C_3=\{x\in \mathbb{R}^n| \quad \sum_{i=1}^n x_i \geq N+1\}$. Each entries of $A,A_1,b,b_1,c,d$ is uniformly generated in the interval $\left[0,1\right]$.

\begin{table}[ht]
\caption{Algorithm with $\alpha_k=\frac{100}{k+1}$} 
\centering 
\begin{tabular}{c c c c c c c c} 
\hline\hline 
 n &N. of prob. & CPU-times(s)& Error 1& Error 2\\ [0.5ex] 
\hline 
5&100& 11.339168& 0.000224& 0.383520& \\
10&100&  11.638716& 0.000182& 0.383298& \\
20&100& 13.612659& 0.000170& 0.386529&   \\
50&100& 42.999559&0.000165& 0.388552&  \\
\hline 
\end{tabular}
\label{tab2} 
\end{table}
\end{example}
\begin{example}
In this example, we take
$$C=C_1\cap C_2,$$
where  $C_1=\left[ 1,3 \right]^n$ and $C_2=\{x\in \mathbb{R}^n| \quad \sum_{i=1}^3 x_i \geq 3\}$.  Each entries of $A,A_1,b,b_1,c,d$ is uniformly generated in the interval $\left[0,1\right]$.

Note that $x^*\in C$ is a solution of the equilibrium problem (EP) if and only if $x^*$ belongs to the solution set of the following affine fractional programming problem
$$\min_{y\in C} g(x^*,y),$$
where $ g(x,y)=\langle Ax+b, \frac{A_1y+b_1}{c^Ty+d}\rangle$.
Thus  we can use linear programming algorithms to compute
$$err_3=\frac{-\min_{y\in C} g(P_C(x^k),y)+g(P_C(x^k), P_C(x^k))}{g(P_C(x^k), P_C(x^k))}.$$
and use it as a stopping criterion.
\begin{table}[ht]
\caption{Algorithm with $\alpha_k=\frac{100}{k+1}$} 
\centering 
\begin{tabular}{c c c c c c c c} 
\hline\hline 
 n &N. of prob.   & CPU-times(s)& Error1& Error 2& Error 3\\ [0.5ex] 
\hline 
5&100& 10.218638& 0.000210& 0.199647&0.058665&\\
10&100&  10.811854&0.000275&0.200397&0.051829&\\
20&100& 12.970668& 0.000498& 0.200389&0.074603&   \\
50&100& 45.202791&0.000731& 0.200382& 0.086886& \\
\hline 
\end{tabular}
\label{tab3} 
\end{table}
\end{example}

\section{Conclustion}
We have proposed an iterative  star-subgradient  projection algorithm for solving a class of equilibrium problems over the intersection of closed, convex sets, where the bifunction is quasiconvex in its second variable.  The search direction at each iteration is defined by a star-subgradient at the current iterate, and the projection is executed independently  on each component  of the intersection sets.  Convergence of the algorithm has been shown, and some illustrative examples have been solved.


\end{document}